\theoremstyle{plain}
\newtheorem{thm}{Donotwrite}[section]
\newtheorem{definition}[thm]{Definition}
\newtheorem{theorem}[thm]{Theorem}
\newtheorem{corollary}[thm]{Corollary}
\theoremstyle{definition}
\newtheorem{example}[thm]{Example}
\numberwithin{equation}{section}
\newcommand{\nc}{\newcommand}
\nc{\op}{\oplus} \nc{\pv}{P^{\vee}}
\nc{\B}{\mathbf{B}} \nc{\V}{\mathbf{V}} 
\nc{\nbinom}[2]{\genfrac{}{}{0pt}{1}{{#1}}{{#2}}}
\nc{\qbinom}[2]{\left[\genfrac{}{}{0pt}{1}{{#1}}{{#2}}\right]}
\nc{\ft}{\tilde{f}} 
\nc{\et}{\tilde{e}} 
\nc{\Y}{\mathbf{Y}}
\nc{\T}{\mathbf{T}}
\nc{\ra}{\rightarrow} 
\nc{\vep}{\varepsilon} 
\nc{\vp}{\varphi}
\nc{\h}{\mathfrak{h}} 
\nc{\oP}{\overline{P}}
\nc{\Fit}{\tilde{F}_i} 
\nc{\Eit}{\tilde{E}_i}
\nc{\fit}{\tilde{f}_i} 
\nc{\eit}{\tilde{e}_i}
\nc{\mf}{\mathfrak}
\nc{\ds}{\displaystyle}
\nc{\oa}{a'}
\nc{\ob}{b'}
\nc{\mc}{\mathcal}
\nc{\bsx}{\boldsymbol{x}}
\nc{\imin}{i_{min}}
\nc{\imax}{i_{max}}
\nc{\dmin}{d_{min}}
\nc{\dmax}{d_{max}}
\begin{document}

\title{Lattice Paths, Young Tableaux, and Weight Multiplicities}

\author{Rebecca L. Jayne}
\address{Hampden-Sydney College, Hampden-Sydney, VA 23943}
\email{rjayne@hsc.edu} 

\author{Kailash C. Misra}
\address{Department of Mathematics, North Carolina State University,  Raleigh,  NC 27695-8205}
\email{misra@ncsu.edu}

\subjclass[2010]{Primary 05E10, 17B10; Secondary 05A05, 05A17, 17B67}
\keywords{Lattice path \and Young tableau \and avoiding permutation \and affine Lie algebra \and weight multiplicity}
\thanks{KCM: partially supported by Simons Foundation grant \#  307555}

\begin{abstract}
For $\ell \geq 1$ and $k \geq 2$, we consider certain admissible sequences of $k-1$ lattice paths in a colored $\ell \times \ell$ square.  We show that the number of such admissible sequences of lattice paths is given by the sum of squares of the number of standard Young tableaux of partitions of $\ell$ with height $\leq k$, which is also the number of $(k+1)k\cdots21$-avoiding permutations of $\{1, 2, \ldots, \ell\}$.  Finally, we apply this result to the representation theory of the affine Lie algebra $\widehat{sl}(n)$ and show that this quantity gives the multiplicity of  certain maximal dominant weights in the irreducible module $V(k\Lambda_0)$.
\end{abstract}

\maketitle

\section{Introduction}\label{intro}
For fixed integers $\ell \geq 1$ and $k \geq 2$, we consider the admissible sequences of $k-1$ lattice paths in a colored $\ell \times \ell$ square given in \cite{JM}.  Each admissible sequence of paths can be associated with a partition $\lambda$ of $\ell$.  In Section \ref{paths}, we show that the number of self-conjugate admissible sequences of paths associated with $\lambda$ is equal to the number of standard Young tableaux of shape $\lambda$, and thus can be calculated using the hook length formula.  We extend this result to include the non-self-conjugate admissible sequences of paths and show that the number of all such admissible sequences of paths is equal to the sum of squares of the number of standard Young tableaux of partitions of $\ell$ with height less than or equal to $k$. Using the RSK correspondence in \cite{Sc},  it is shown in (\cite{Sta}, Corollary 7.23.12)  that the sum of squares of the number of standard Young tableaux of partitions of $\ell$ with height less than or equal to $k$ is equal to the number of $(k+1)k\cdots21$-avoiding permutations of $\{1, 2, \ldots, \ell\}$.

In Section \ref{multiplicities}, we apply our results to the representation theory of the affine Kac-Moody algebra $\widehat{sl}(n)$. Let 
$\{\alpha_0, \alpha_1, \ldots, \alpha_{n-1}\}$, $\{h_0, h_1, \ldots, h_{n-1}\},$ and $\{\Lambda_0, \Lambda_1, \ldots, \Lambda_{n-1}\}$ denote the simple roots, simple coroots, and fundamental weights respectively. Note that $\Lambda_j(h_i) = \delta_{ij}$. For $1 \leq \ell \leq \left \lfloor \frac{n}{2} \right \rfloor$, set $\gamma_\ell = \ell \alpha_0 + (\ell - 1)\alpha_{1} + (\ell - 2) \alpha_{2} + \cdots + \alpha_{\ell- 1} + \alpha_{n-\ell+1} + \cdots +  (\ell - 2)\alpha_{n-2} + (\ell - 1)\alpha_{n-1}$ and $\mu_\ell = k\Lambda_0 - \gamma_\ell$. As shown in \cite{JM}, $\mu_\ell$ are maximal dominant weights of the irreducible $\widehat{sl}(n)$-module $V(k\Lambda_0)$. We show that the multiplicity of the weight $\mu_\ell$ in $V(k\Lambda_0)$ is the number of $(k+1)k\cdots21$-avoiding permutations of $\{1, 2, \ldots, \ell\}$, which proves Conjecture 4.13 in \cite{JM}.

\section{Lattice Paths} \label{paths}

For fixed integers $\ell \geq 1$ and $k \geq 2$, consider the $\ell \times \ell$ square containing $\ell ^2$ unit boxes  in the fourth quadrant so that the top left corner of the square is at the origin. We assign color $a+b$ to a box if its upper left corner has coordinates $(a,b)$. This gives the following 
$\ell \times \ell$ colored square $Y$:
 
\begin{figure}[h]
\centering
  $$ \normalsize \tableau{0 & 1 &  \cdots & \ell - 2 & \ell -1 \\ -1&  0 & \cdots & \ell - 3 & \ell -2\\  \vdots  & \vdots &  \ddots & \vdots & \vdots \\ 2-\ell  &  3-\ell  & \cdots & 0 & 1\\ 1- \ell & 2 - \ell & \ldots & -1 & 0 }$$
 \caption{$Y$}
\label{thesquare}
 \end{figure}
 
A lattice path $p$ on $Y$ is a path joining the lower left corner $(0, -\ell)$ to the upper right corner $(\ell, 0)$ moving unit lengths  up or right. For two lattice paths $p, q$ on $Y$ we say that $p \leq q$ if the boxes above $q$ are also above $p$.  Now, we draw $k-1$ lattice paths, $p_1, p_2, \ldots, p_{k-1},$ on $Y$ such that $p_1 \leq p_2 \leq \cdots \leq p_{k-1}$. For integers $i, j$, where $2 \leq i \leq k - 1$, $-(\ell-1) \leq j \leq \ell-1$, we define $t_i^j \geq 0$ to be the number of $j$-colored boxes between $p_{i-1}$  and $p_{i}$.  We define $t_1^j$ to be the number of $j$-colored boxes below $p_1$ and $t_0^j$  to be the number of $j$-colored boxes above $p_{k-1}$.  
 
\begin{definition}   \label{pathsdef} \cite{JM} We call a sequence of lattice paths $p_1\leq p_2 \leq \ldots \leq p_{k-1}$ on $Y$ admissible if it satisfies the following conditions:  
\begin{enumerate}

\item \label{conds}  $p_1$ does not cross the diagonal $y = x - \ell$  and 
\item for  $2 \leq i \leq k-1$, $-(\ell-1) \leq j \leq \ell-1$, we have \\

 $t_i^j \leq \min \left \{t_{i-1}^j, \ell - |j| - t_1^j - \ds \sum_{a=1}^{i-1}t_a^j \right \}$ and $\begin{cases}
t_i^j \leq t_i^{j-1}, \quad j>0 \\
t_i^j \leq t_i^{j+1}, \quad j <0.
\end{cases}$

\end{enumerate}
\end{definition}
Denote by $\mc{T}_\ell^k$ the set of all admissible sequences of $k-1$ paths.  Notice that there are $\ell$ 0-colored boxes in $Y$ and hence for any admissible sequence of paths, $\ds \sum_{i=0}^{k-1}t_i^0 = \ell$.  In addition, it follows from Definition \ref{pathsdef}(2) that $t_0^0 \geq t_1^0 \geq \cdots \geq t_{k-1}^0$ for any admissible sequence of paths.  Thus, we can and do associate an admissible sequence of paths $p_1\leq p_2 \leq \ldots \leq p_{k-1}$ on $Y$ with a partition $\lambda = (t_0^0, t_1^0, \ldots, t_{k-1}^0)$ of $\ell$.  In this case, we say that this admissible sequence of paths is of type $\lambda$ and often draw $\lambda$ as a Young diagram.  

\begin{example}  Figure \ref{AdSeq}(a) is an element of $\mc{T}_6^4$, where $p_1, p_2,$ and $p_3$ are shown in Figures \ref{AdSeq}(b), \ref{AdSeq}(c), and \ref{AdSeq}(d), respectively.  Notice that this admissible sequence of paths is of type $\lambda = (2,2,1,1)$.
\begin{figure*}[!h]
\centering
\begin{tabular}{cc}
\includegraphics[scale=.7]{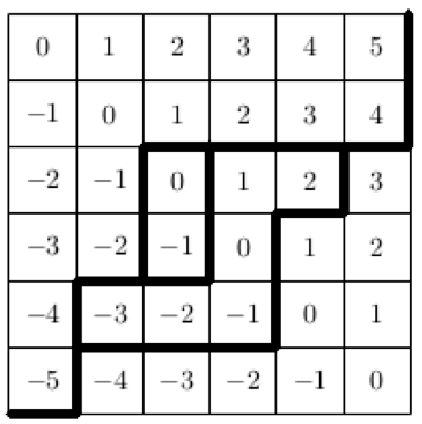} & \includegraphics[scale=.7]{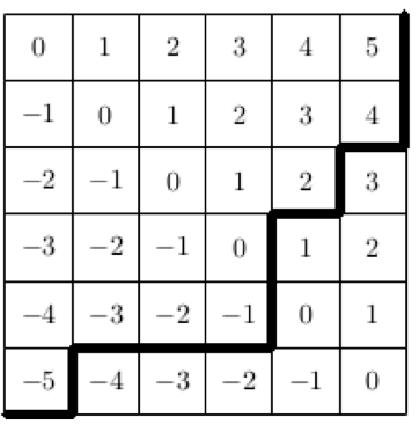} \\ (a) & (b) \\
 \includegraphics[scale=.7]{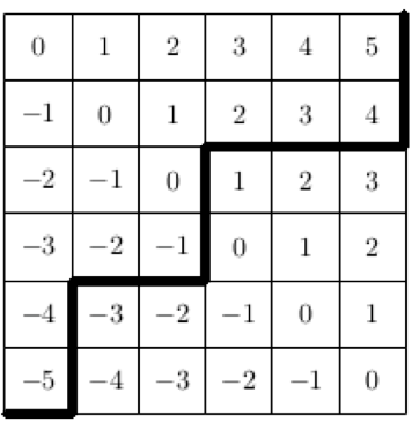} & \includegraphics[scale=.7]{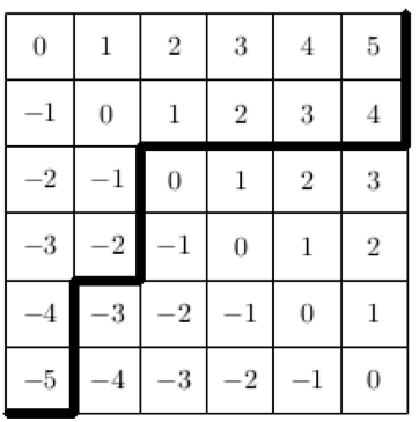} \\ 
(c) & (d) \\ 
\end{tabular}
\caption{Admissible sequence of paths}
 \label{AdSeq}
\end{figure*}
\end{example}

For a given partition $\lambda = (\lambda_1, \lambda_2, \ldots, \lambda_k) \vdash \ell$, we let $f^\lambda$ denote the number of standard Young tableaux of shape $\lambda$ and let $\lambda' = (\lambda_1', \lambda_2', \ldots, \lambda_m')$ be its conjugate partition.  For a Young diagram of type $\lambda$, the hook length at a box $u$ in row $i$ and column $j$  is given by $h(u) = \lambda_i + \lambda_j' - i - j + 1$.  The hook length formula states that $\ds f^\lambda = \frac{\ell!}{\ds \Pi_{u \in \lambda}h(u)}$.  We denote the height of a standard Young tableau of shape $\lambda$ by $ht(\lambda)$.  

\begin{example}
For the partition $\lambda = (4,2,1) \vdash 7$, we have the Young diagram in Figure \ref{yd}(a).  Using the hook length formula, and moving across the rows and then down in the diagram, we calculate that the number of standard Young tableaux of shape $\lambda$ is $f^{\lambda} = \ds \frac{7!}{6 \cdot 4 \cdot 2 \cdot 1 \cdot 3 \cdot 1 \cdot 1} = 35.$ Figure \ref{yd}(b) shows one such standard Young tableau of shape $\lambda$.  

\begin{figure*}[h]
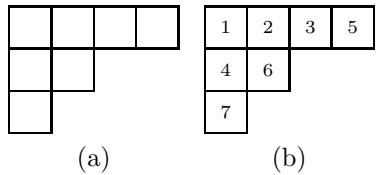

\centering
\begin{tabular}{cc}
{\scriptsize \tableau{  & & & \\  & \\ \\ }} & {\scriptsize \tableau{ 1 & 2& 3& 5\\ 4 &6 \\7 \\ }} \\ 

(a) & (b) \\ 
\end{tabular}
\caption{Young diagram and standard Young tableau of shape $(4,2,1)$}
 \label{yd}
\end{figure*}
\end{example}

A lattice path on $Y$ is self-conjugate if it is symmetric about the diagonal from $(0,0)$ to $(\ell, -\ell)$. A sequence of lattice paths is self-conjugate if all lattice paths in the sequence are self-conjugate.  Next, we give a relationship between self-conjugate admissible sequences of paths and standard Young tableaux.  We denote the set of all self-conjugate admissible sequences of $k-1$  paths on the $\ell \times \ell$ colored square $Y$ by $S_\ell^k$.  Let $F_\ell^k$ denote the set of all standard Young tableaux with $\ell$ boxes and height $\leq k$.  Then $|F_\ell^k| =  \ds  \sum_{\lambda \vdash \ell, h(\lambda) \leq k} f^\lambda$.  

Now, we define a map $\tau \colon F_\ell^k \to S_\ell^k$ as follows.  Let $X \in F_\ell^k$.  We draw the first $\ell$ moves of each path, $p_1, p_2, \ldots, p_{k-1},$ on the $\ell \times \ell$ colored square $Y$, giving the portion of each path from $(0,-\ell)$ to the diagonal connecting $(0,0)$ to $(\ell,-\ell)$, move by move.  In the $i^{th}$ path $p_i$, the $j^{th}$ move is up when $j$ is in row 2 through row $(i+1)$ and the $j^{th}$ move is right otherwise.  Once the first $\ell$ moves of every path are drawn, we complete each path by reflecting over the diagonal, obtaining a self-conjugate sequence of paths.  

Although the image $\tau (X)$ is a self-conjugate sequence of $k-1$ paths by definition, we still need to show that it is an admissible sequence of paths.  Notice that for path $p_1$ in $\tau(X)$, the $m^{th}$ up move, move $v$, is represented in column $m$ of row 2 of $X$.  Thus, there are at least $m$ entries in row 1 of $X$ less than or equal to $v$, which correspond to $m$ right moves prior to the $m^{th}$ up move in $p_1$ of $\tau(X)$.  Hence, $p_1$ does not cross the diagonal $y = x - \ell$.  For $1-\ell \leq j \leq 0$, we call the line $y = -x+j$ connecting $(0,j)$ and $(\ell+j, -\ell)$ the $j$-diagonal passing through all $j$-colored boxes. Note that it takes $\ell + j$ moves to go from $(0, - \ell)$ to the $j$-diagonal. By definition, $t_0^j$ is the number of right moves in the path $p_{k-1}$ below the $j$-diagonal, which equals the number of entries in the first row of $X$ that are less than or equal to $\ell +j$. For $2 \leq i \leq k-1$, the number of up moves in the path $p_i$ below the $j$-diagonal is equal to the number of $j$-colored boxes below $p_i$. Hence, by definition, $t_i^j$ is the number of up moves below the $j$-diagonal that are in $p_i$ but not in $p_{i-1}$. Therefore, the number of up moves in $p_i$ (resp. $p_{i-1}$) below the $j$ diagonal is $t_1^j + t_2^j + \cdots + t_i^j$ (resp. $t_1^j + t_2^j + \cdots + t_{i-1}^j$). Since the paths are self-conjugate, using symmetry it follows that  $p_i \geq p_{i-1}$. Hence $p_1 \leq p_2 \leq \cdots \leq p_{k-1}$.  Now observe that by definition of $\tau$, $t_i^j$ equals the number of boxes in row $i +1$ of $X$ with entries less than or equal to $\ell+j$. Since $X$ is a standard tableau, we have $t_i^j \leq t_{i-1}^j$.  There are at least $t_1^j$ boxes in the second row of $X$ and hence in the first row of $X$. So it follows that $2t_1^j + t_2^j + \cdots + t_i^j \leq \ell +j$ which implies $t_i^j \leq \ell + j - t_1^j - \ds \sum_{a=1}^{i-1}t_a^j$. Hence using the symmetry of the paths we have $t_i^j \leq \min \left \{t_{i-1}^j, \ell - |j| - t_1^j - \ds \sum_{a=1}^{i-1}t_a^j \right \}$. Finally, for $j < 0$, $t_i^j$ (resp. $t_i^{j+1}$) is the number of boxes in row $i+1$ of $X$ with entries less than or equal to $\ell + j$ (resp. $\ell + j +1$). So $t_i^j \leq t_i^{j+1}$ for $ j < 0$. Now using symmetry and reflecting on the diagonal $y = -x$, we have $t_i^j \leq t_i^{j-1}$ for $ j > 0,$ proving that $\tau (X) \in S_\ell^k$.

\begin{theorem}\label{tau} For $k \geq 2 , \ell \geq 1,$ the map $\tau \colon F_\ell^k \to S_\ell^k$  is a bijection. Hence $|S_\ell^k| =  \ds  \sum_{\lambda \vdash \ell, h(\lambda) \leq k} f^\lambda$. In particular, the number of self-conjugate admissible sequences of paths $p_1 \leq p_2 \leq \cdots \leq p_{k-1}$ of type $\lambda = (t_0^0, t_1^0, \cdots , t_{k-1}^0) \vdash \ell$ is equal to the number of standard Young tableaux of shape $\lambda$. 
\end{theorem}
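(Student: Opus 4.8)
The plan is to prove the theorem by constructing an explicit two-sided inverse $\sigma\colon S_\ell^k\to F_\ell^k$ of $\tau$. Since the paragraph preceding the statement already shows that $\tau(X)\in S_\ell^k$ for every $X\in F_\ell^k$, it will then remain only to check that $\sigma$ is well defined and that $\sigma\circ\tau$ and $\tau\circ\sigma$ are the identity maps. Given a self-conjugate admissible sequence $p_1\leq\cdots\leq p_{k-1}$, each $p_i$ is determined by its first $\ell$ moves, since reflecting these across the diagonal from $(0,0)$ to $(\ell,-\ell)$ recovers the rest of $p_i$; so I record for each $i$ the set $U_i\subseteq\{1,\dots,\ell\}$ of positions among the first $\ell$ moves of $p_i$ at which $p_i$ moves up, with the convention $U_0=\emptyset$. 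The first thing I would establish is the dictionary $t_i^j=|U_i\cap\{1,\dots,\ell+j\}|-|U_{i-1}\cap\{1,\dots,\ell+j\}|$ for $-(\ell-1)\leq j\leq 0$: this follows from the elementary count that the number of $j$-colored boxes lying below a lattice path on $Y$ equals the number of up-moves among its first $\ell+j$ moves whenever $j\leq 0$, together with the definition of $t_i^j$ as the number of $j$-colored boxes between $p_{i-1}$ and $p_i$ (and below $p_1$ when $i=1$), using $p_{i-1}\leq p_i$.

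Next I would show that these sets are nested, $\emptyset=U_0\subseteq U_1\subseteq\cdots\subseteq U_{k-1}$. Via the dictionary, the inequality $t_i^j\leq t_i^{j+1}$ of Definition \ref{pathsdef}(2) (for $j<0$) translates exactly into the implication $s\in U_{i-1}\Rightarrow s\in U_i$ with $s=\ell+j+1$ ranging over $\{2,\dots,\ell\}$, while the remaining case $s=1$ follows from $t_i^{-(\ell-1)}\geq 0$ (it counts boxes). With nesting established, I define $X=\sigma(p_1\leq\cdots\leq p_{k-1})$ to be the filling whose first row lists $\{1,\dots,\ell\}\setminus U_{k-1}$ in increasing order and whose $(i+1)$st row lists $U_i\setminus U_{i-1}$ in increasing order for $1\leq i\leq k-1$. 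This is a left-justified array with $k$ rows, some possibly empty, containing each of $1,\dots,\ell$ exactly once, with strictly increasing rows; in particular its height is at most $k$.

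The crux is verifying that $X$ is a standard Young tableau. Here I would use the criterion that a left-justified filling with strictly increasing rows is a standard Young tableau precisely when, for every $r$, the cells with entries $\leq r$ form a Young subdiagram, i.e.\ their row lengths are weakly decreasing. For $r=\ell+j$ with $-(\ell-1)\leq j\leq 0$, the dictionary shows these row lengths are $(\ell+j)-\sum_{a=1}^{k-1}t_a^j,\ t_1^j,\ t_2^j,\ \dots,\ t_{k-1}^j$, so the criterion reduces to the two requirements $t_1^j\geq t_2^j\geq\cdots\geq t_{k-1}^j$ and $2t_1^j+t_2^j+\cdots+t_{k-1}^j\leq\ell-|j|$. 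The first is precisely the chain of inequalities $t_i^j\leq t_{i-1}^j$ of Definition \ref{pathsdef}(2); the second is the $i=k-1$ instance of the bound in Definition \ref{pathsdef}(2) when $k\geq 3$, and for $k=2$ it is exactly the statement that $p_1$ does not cross $y=x-\ell$, namely Definition \ref{pathsdef}(1). Conversely, these are exactly the conditions supplied by admissibility. This translation — reading the admissibility conditions, phrased through the box-counts $t_i^j$, as the standardness of $X$, phrased through thresholds — is the main obstacle, but it is essentially the mirror image of the verification already carried out just before the theorem, so it is a bookkeeping argument rather than a conceptual one.

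Finally I would check that $\sigma$ and $\tau$ are mutually inverse, which is immediate from the constructions. In $\tau(X)$, the up-moves among the first $\ell$ moves of the $i$th path are, by definition of $\tau$, exactly the entries in rows $2$ through $i+1$ of $X$, so $U_i$ recovers those rows and $\sigma(\tau(X))=X$. Conversely, starting from a self-conjugate admissible sequence, $\tau(\sigma(\cdot))$ produces paths whose first $\ell$ moves coincide with the original ones, hence, by self-conjugacy, the original paths. Thus $\tau$ is a bijection, giving $|S_\ell^k|=|F_\ell^k|=\sum_{\lambda\vdash\ell,\,h(\lambda)\leq k}f^\lambda$. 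Moreover the shape of $\sigma(p_1\leq\cdots\leq p_{k-1})$ is $(t_0^0,t_1^0,\dots,t_{k-1}^0)$, which is exactly the type of the sequence, so $\tau$ restricts, for each $\lambda\vdash\ell$ with $h(\lambda)\leq k$, to a bijection between standard Young tableaux of shape $\lambda$ and self-conjugate admissible sequences of type $\lambda$, yielding the final assertion.
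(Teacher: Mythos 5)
Your proposal is correct and follows essentially the same route as the paper: both construct the explicit inverse $\sigma$ by reading off the up-moves among the first $\ell$ moves of each path, placing each index into the row determined by the first path in which that move is up, and then translating the admissibility conditions into standardness of the resulting tableau. Your version is somewhat more systematic (making the nesting $U_1\subseteq\cdots\subseteq U_{k-1}$ and the threshold criterion explicit, where the paper checks column-increase by contradiction), but the underlying argument is the same.
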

\begin{proof} To prove the statement, it is sufficient to show that the map $\tau$ has an inverse. We define the map $\sigma \colon S_\ell^k \to F_\ell^k$ as follows. Let $Z = \{p_1 \leq p_2 \leq \cdots \leq p_{k-1}\} \in S_\ell^k$ be a self-conjugate admissible sequence of paths on the $\ell \times \ell$ square $Y$ of type $\lambda = (t_0^0, t_1^0, \ldots, t_{k-1}^0) \vdash \ell$. Note that $t_m^0$ equals the number of up moves in the path $p_m$ but not in any path $p_i , 1 \leq i < m$. We draw the Young diagram $X$ of shape $\lambda$. Now we fill in the boxes of $X$ with numbers $1, 2, \ldots , \ell$ as follows.  We traverse, in order, the first $\ell$ moves from $(0, -\ell)$ to the $0$-diagonal in each path $p_1, p_2, \ldots, p_{k-1}$. If the $i^{th}$ move is a right move for all paths, then we place the number $i$ in the leftmost available box in the first row of $X$. If the $i^{th}$ move is an up move in any path, then choose $m$ to be the smallest integer such that the up move occurs in the path $p_m$. In such case, we place the number $i$ in the leftmost available box of row $m+1$ of $X$.   

By construction, the entries in the rows of $X$ increase from left to right.  Notice that for $2 \leq i \leq k-1$, any up moves in the path $p_i$ that do not occur in $p_{i-1}$ are represented in row $i+1$ of $X = \sigma(Z)$. So for $-(\ell -1) \leq j \leq 0$, $t_i^j$  in $Z$ is equal to the number of entries in row $(i+1)$ of $X$ that are less than or equal to $\ell+j$. To see that the entries in the columns increase from top to bottom, suppose for the sake of contradiction that for some row $r$ ($2 \leq r \leq k$) the entry in column $c$, say $a$, is less than the entry in column $c$ of row $r-1$.  Since the entries in $X$ increase from left to right, the number of entries in row $r$ less than or equal to $a$ is $c$.  Hence $t_{r-1}^{a - \ell} = c.$ Since $t_{r-2}^{a - \ell}$ is the number of entries in row $r-1$ less than or equal to $a$ and the entry in column $c$ of row $r-1$ is greater than $a$, we have $t_{r-2}^{a - \ell} < t_{r-1}^{a - \ell} = c$, which contradicts Definition \ref{pathsdef}.  Hence $X$ is a standard Young tableaux and $X \in F_\ell^k$.  Using the symmetry of the paths and the definition of $\tau$ it follows that $\tau (\sigma(Z)) = Z$, which proves the theorem.
\end{proof}

\begin{example}
Notice that Figure \ref{mapextab}(a) is an element of $F_6^4$.  To apply $\tau$, we draw three paths to the diagonal, with moves shown in Figure \ref{mapextab}(b).  Figures \ref{mapextab}(c), \ref{mapextab}(d), and \ref{mapextab}(e) show the first $6$ moves of $p_1$, $p_1$ and $p_2$, and $p_1, p_2$ and $p_3$, respectively.  These paths are reflected over the diagonal to form an element of $S_6^4$ as shown in Figure \ref{mapextab}(f).

\begin{figure}[h]
\begin{centering}

\begin{multicols}{2}

\ \includegraphics[scale=.7]{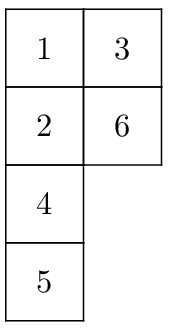} 

(a)

\begin{tabular}{lllllll} 
\hline\noalign{\smallskip}
 & \multicolumn{6}{c} {Move} \\
 \noalign{\smallskip}\hline\noalign{\smallskip}
Path & 1 & 2 & 3 & 4 & 5 & 6 \\ 
\noalign{\smallskip}\hline\noalign{\smallskip}
1 & R & U & R & R & R & U \\ 
2 & R & U & R & U & R & U \\
3 & R & U & R & U & U & U \\ 
\noalign{\smallskip}\hline
\end{tabular} 

\smallskip

(b)

\end{multicols}

\begin{tabular}{ccc}
\includegraphics[scale=.75]{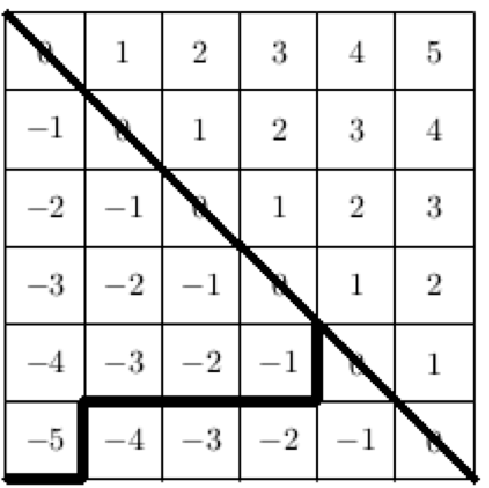} & \hspace{.5in} &  \includegraphics[scale=.75]{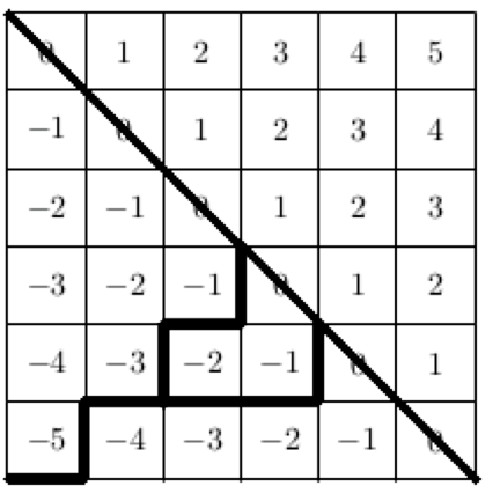} \\
(c) & &  (d) \\
\includegraphics[scale=.75]{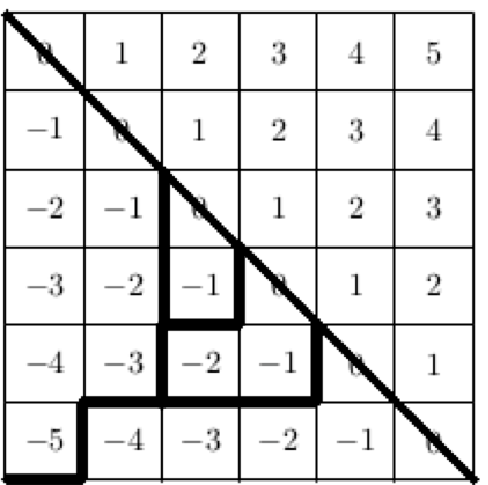} & & \includegraphics[scale=.75]{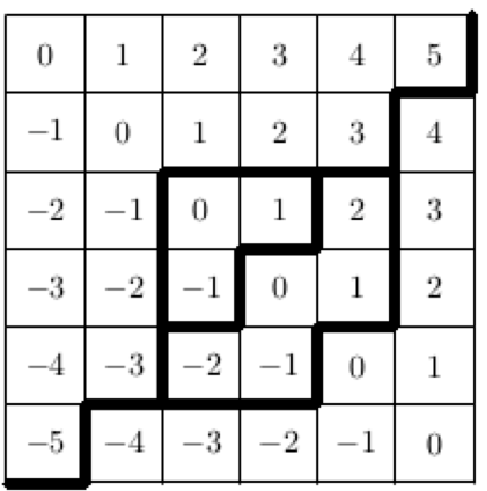} \\
(e) & &  (f) \\ 
\end{tabular}

\end{centering}
\caption{Example of $\tau$ map}
\label{mapextab}
\end{figure}
\end{example}

\begin{example}
Let $Z \in S_6^4$ as in Figure \ref{sigmamap}(a).  In Figure \ref{sigmamap}(b), we record the moves for each path.  We construct $X \in T_6^4$ as in Figure \ref{sigmamap}(c) by first drawing the Young diagram of shape $\lambda = (t_0^0 = 2, t_1^0 = 2, t_2^0 = 1,t_3^0 =1)$.  We fill in the integers 1 through $6$ using the information collected in Figure \ref{sigmamap}(b) and applying $\sigma$.  For example, since move 1 for all paths is right, we place 1 in row 1 of $X$.  Since move 6 has its first up move in $p_3$, we place 6 in row 4.  

\begin{figure*}[h]

\begin{centering}

\begin{multicols}{2}
\includegraphics[scale=.75]{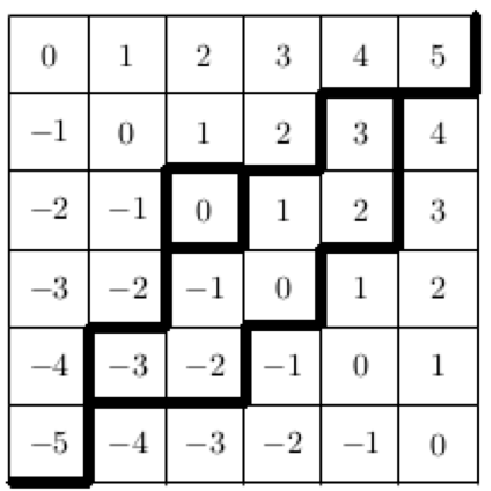} 

\begin{tabular}{llllllll} \hline \noalign{\smallskip}
 & \multicolumn{6}{c} {Move} \\ 
 \noalign{\smallskip}\hline\noalign{\smallskip}
Path & 1 & 2 & 3 & 4 & 5 & 6 \\ 
\noalign{\smallskip}\hline\noalign{\smallskip}
1 & R & U & R & R & U & R \\ 
2 & R & U & U & R & U & R \\ 
3 & R & U & U & R & U & U \\
\noalign{\smallskip}\hline
\end{tabular}

\end{multicols}

\begin{multicols}{2}
(a)

(b)
\end{multicols}

\includegraphics[scale=.75]{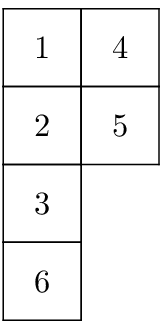}

(c)

\end{centering}
\caption{Example of $\sigma$ map}
\label{sigmamap}
\end{figure*}
\end{example}
  
\begin{theorem} \label{squaring} For $k \geq 2$ and $\ell \geq 1$, let  $\lambda = (t_0^0, t_1^0, \ldots, t_{k-1}^0) \vdash \ell$.  Then the number of admissible sequences of $k-1$ paths on a colored $\ell \times \ell$ square $Y$ of type $\lambda = (t_0^0, t_1^0, \ldots, t_{k-1}^0)$ is 
$(f^\lambda)^2$.  Hence $|\mc{T}_\ell^k| =  \ds  \sum_{\lambda \vdash \ell, h(\lambda) \leq k} (f^\lambda)^2$.
\end{theorem}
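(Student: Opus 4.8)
The plan is to upgrade Theorem~\ref{tau} from self-conjugate sequences to all admissible sequences: I would exhibit a bijection $\Phi$ from the set of admissible sequences of type $\lambda$ to the set of ordered pairs of standard Young tableaux of shape $\lambda$, which gives the count $(f^\lambda)^2$ at once, and then sum over $\lambda$. The underlying idea is that every path in the sequence reaches the diagonal $D$ from $(0,0)$ to $(\ell,-\ell)$ after exactly $\ell$ of its $2\ell$ moves, so the family $P=(p_1\le\cdots\le p_{k-1})$ splits into a \emph{lower part} $P^-$, the first $\ell$ moves of every path, and an \emph{upper part} $P^+$, the last $\ell$ moves. Since every $0$-colored box lies on $D$, the number of $0$-boxes below $p_i$ equals $t_1^0+\cdots+t_i^0$, so the points where the paths cross $D$ are determined by $\lambda$ alone; thus $P^-$ and $P^+$ are glued along data fixed by $\lambda$, and $P$ is recovered from the pair $(P^-,P^+)$.

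First I would extract a tableau $X_1$ from $P^-$ by exactly the recipe used for $\sigma$ in the proof of Theorem~\ref{tau}: traverse the first $\ell$ moves, put a move that is ``right'' in every path into row $1$, and put a move whose earliest ``up'' occurs in $p_m$ into row $m+1$. That proof already shows that $X_1$ is a standard tableau of shape $\lambda$, that it records precisely the numbers $t_i^j(P)$ for $-(\ell-1)\le j\le 0$, and that $\tau$ reconstructs $P^-$ from $X_1$; crucially, it uses only the lower half and only the instances of Definition~\ref{pathsdef}(2) with $j\le 0$, with Definition~\ref{pathsdef}(1) being automatic once $X_1$ is standard. The instances of Definition~\ref{pathsdef}(2) with $j\ge 0$ form a system that becomes identical to the $j\le 0$ system under the relabelling $j\mapsto -j$, and they involve only the numbers $t_i^j(P)$ with $0\le j\le\ell-1$, that is, only $P^+$. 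Applying the mirror of the recipe above to the last $\ell$ moves of the paths (geometrically: first reflecting $Y$ in the color-preserving diagonal $y=x-\ell$, which fixes the two path endpoints) therefore produces a second standard tableau $X_2$ of shape $\lambda$, again invertibly. I would set $\Phi(P)=(X_1,X_2)$.

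To see that $\Phi$ is a bijection, the substantive point is surjectivity: an arbitrary pair $(X_1,X_2)$ of standard tableaux of shape $\lambda$ must glue back to a genuine admissible sequence. Here I would use the standard fact that a monotone lattice path $p$ in $Y$ is determined by, and determines, the numbers $b_j(p)$ of $j$-colored boxes below it, subject only to the \emph{local} inequalities $0\le b_{j+1}(p)-b_j(p)\le 1$ for $-\ell\le j\le-1$ and $-1\le b_{j+1}(p)-b_j(p)\le 0$ for $0\le j\le\ell-1$ (with the conventions $b_{-\ell}(p)=b_\ell(p)=0$), since exactly one move is made between the consecutive anti-diagonals $x+y=j$ and $x+y=j+1$. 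Combining this with the identities $b_j(p_i)=t_1^j+\cdots+t_i^j$ and the equivalence $p_{i-1}\le p_i\iff t_i^j\ge 0$ for all $j$, and noting that $X_1$ (for $j\le 0$) and $X_2$ (for $j\ge 0$) agree at $j=0$ on the value $t_i^0$ dictated by $\lambda$, one sees that the combined array $(t_i^j)$ does assemble into a legitimate family $p_1\le\cdots\le p_{k-1}$, whose Definition~\ref{pathsdef} conditions are exactly the union of the $j\le 0$ inequalities (valid because $X_1$ is standard) and the $j\ge 0$ inequalities (valid because $X_2$ is standard). Since $\Phi$ is clearly injective, it is a bijection, so the number of admissible sequences of type $\lambda$ is $(f^\lambda)^2$; summing over all $\lambda\vdash\ell$ with $h(\lambda)\le k$, the only types that occur since $\lambda=(t_0^0,\dots,t_{k-1}^0)$ has at most $k$ parts, yields $|\mathcal{T}_\ell^k|=\sum_{\lambda \vdash \ell, h(\lambda)\le k}(f^\lambda)^2$.

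I expect the main obstacle to be making the upper-half step genuinely rigorous rather than merely ``by symmetry'': one must commit to a precise recipe for reading $X_2$ off the last $\ell$ moves and check that reflection across $y=x-\ell$ really does carry the $j\ge 0$ constraint system onto the $j\le 0$ one, keeping track of the fact that this reflection reverses the order of the paths (so that the indices $0,1,\dots,k-1$ get relabelled) while the shape of $X_2$ remains $\lambda$ because the $0$-box counts $t_i^0$ are shared with $P^-$. The second delicate point is the decoupling at $j=0$: one must verify that the $j<0$ and $j>0$ parts of Definition~\ref{pathsdef} interact only through the frozen values $t_i^0$, which relies on the locality of the path-consistency inequalities above and on Definition~\ref{pathsdef}(1) living entirely in the lower half. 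Granting these, the remaining verifications are routine bookkeeping.
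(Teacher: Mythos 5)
Your overall strategy is the same as the paper's: cut every path at the $0$-diagonal from $(0,0)$ to $(\ell,-\ell)$, observe that the crossing points and all coupling between the two halves are frozen by $\lambda$, and show that each half independently contributes a factor of $f^\lambda$ via the $\sigma/\tau$ machinery of Theorem \ref{tau}. The paper packages this as a bijection between admissible sequences of type $\lambda$ and ordered pairs of \emph{self-conjugate} admissible sequences of type $\lambda$ (each half is completed to a self-conjugate sequence by reflecting it across the $0$-diagonal), and then applies Theorem \ref{tau} twice; your $\Phi$ is essentially the composition of that bijection with $\sigma\times\sigma$.

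The one concrete error is the symmetry you invoke for the upper half. Reflection in $y=x-\ell$ is indeed color-preserving and fixes $(0,-\ell)$ and $(\ell,0)$, but precisely for those reasons it cannot do what you ask of it: it maps each of the two triangles cut out by the $0$-diagonal to itself (so it does not convert upper halves into lower halves), it reverses the nesting order $p_1\le\cdots\le p_{k-1}$ (so the relabelled sequence has type equal to the \emph{reverse} of $\lambda$, which is not a partition), and, being color-preserving, it carries the $j\ge 0$ constraint system to itself rather than onto the $j\le 0$ system --- so the check you defer to would fail as stated. The reflection you want, and the one the paper uses, is across the $0$-diagonal $y=-x$ itself: it swaps the two triangles, negates colors (so the $j\ge 0$ inequalities of Definition \ref{pathsdef}(2) become exactly the $j\le 0$ ones), preserves the order of the paths and hence the type $\lambda$, and turns the upper half of each $p_i$ into the lower half of a path starting at $(0,-\ell)$, to which $\sigma$ applies verbatim. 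For the same reason your claim that Definition \ref{pathsdef}(1) ``lives entirely in the lower half'' is not quite right: the prefix condition on $p_1$ constrains all $2\ell$ moves, and its upper-half part is precisely condition (1) for the reflected sequence. With the correct reflection both of your flagged ``delicate points'' disappear (there is no index relabelling to track), and the rest of your argument --- the decoupling at $j=0$ and the local path-consistency inequalities used for surjectivity --- matches the paper's proof.
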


\begin{proof}  Let $Z_1 = \{p_1 \leq p_2 \leq \cdots \leq p_{k-1}\}$ and $Z_2 = \{p'_1 \leq p'_2 \leq \cdots \leq p'_{k-1}\}$ be two self-conjugate admissible sequences of  paths on $Y$ of type $\lambda = (t_0^0, t_1^0, \ldots, t_{k-1}^0) \vdash \ell$. Then for $1 \leq i \leq k-1$, combining the portion of the path $p_i$ below the $0$-diagonal on $Y$ with the portion of the path $p'_i$ above the $0$-diagonal, we obtain an admissible sequence of $k-1$ paths of type $\lambda$ which is not self-conjugate unless $Z_1 = Z_2$. Conversely, if we have an admissible sequence of paths $Z = \{p_1 \leq p_2 \leq \cdots \leq p_{k-1}\}$ of type $\lambda\vdash \ell$, we can obtain a pair of self-conjugate admissible sequences of paths $(Z_1 , Z_2)$ each of type $\lambda,$ where $Z_1$ (resp. $Z_2$) is obtained by reflecting the portions of the paths in $Z$ below (resp. above) the $0$-diagonal of $Y$. Thus the set of admissible sequences of $k-1$ paths of type $\lambda\vdash \ell$ is in bijection with the set of pairs of self-conjugate admissible sequences of $k-1$  paths of type $\lambda\vdash \ell$. Hence by Theorem \ref{tau}, the number of admissible sequences of $k-1$ paths in $Y$ of type $\lambda\vdash \ell$ is $(f^\lambda)^2$. Since any partition $\lambda \vdash \ell$ of height less than or equal to $k$ is associated with an admissible sequence of $k-1$ paths of type $\lambda$, we have $|\mc{T}_\ell^k| =  \ds  \sum_{\lambda \vdash \ell, h(\lambda) \leq k} (f^\lambda)^2$.
\end{proof}

\begin{example}
In Figure \ref{selffig}(a), we have $Z = \{ p_1 \leq p_2 \leq p_3 \}$, a non-self-conjugate admissible sequence of paths of type $\lambda = (2,2,1,1)$.  Reflecting the portion of $Z$ below the $0$-diagonal gives the self-conjugate sequence $Z_1$ in Figure \ref{selffig}(b) and reflecting the portion of $Z$ above the $0$-diagonal gives the self-conjugate sequence $Z_2$ in Figure \ref{selffig}(c), both of type $\lambda$. Thus, by the bijection in the proof of Theorem \ref{squaring}, the non-self-conjugate sequence of paths $Z$ corresponds to the pair of self-conjugate sequence of paths $(Z_1 , Z_2)$.

\begin{figure*}[h]

\begin{tabular}{ccc}
\includegraphics[scale=.75]{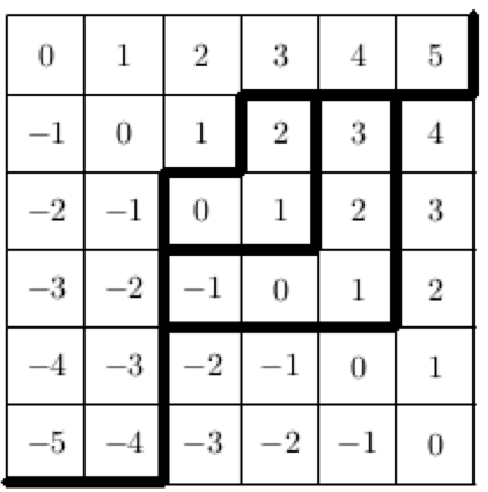} &   \includegraphics[scale=.75]{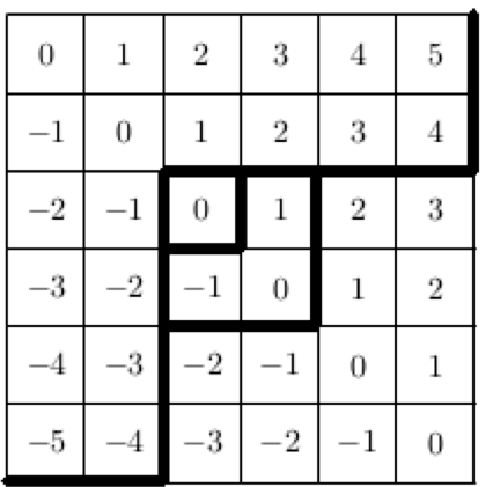} & \includegraphics[scale=.75]{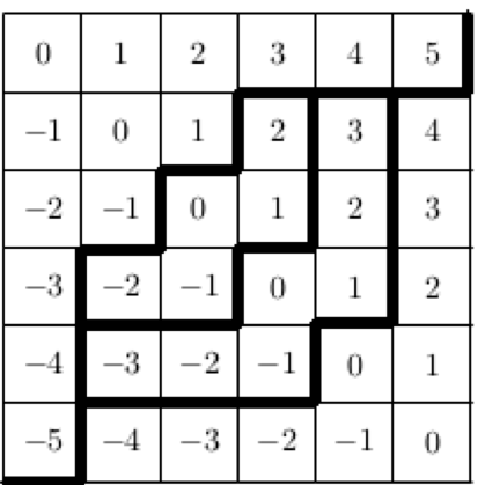}    \\ 
(a) & (b) & (c) \\ 
\end{tabular}
\caption{Non-self-conjugate admissible sequence of paths correspondence with pair of self-conjugate admissible sequences of paths}
\label{selffig}
\end{figure*}
\end{example}


\section{Weight Multiplicities}\label{multiplicities}

In this section we use the result  in Theorem \ref{squaring} to prove  Conjecture 4.13 in \cite{JM}, giving a formula for certain weight multiplicities of the affine Lie algebra $\mathfrak{g}=\widehat{sl}(n) = sl(n) \otimes \mathbb{C}[t, t^{-1}] \oplus \mathbb{C} c \oplus \mathbb{C}d, n\geq 2,$ where $sl(n)$ is the simple Lie algebra of $n\times n$ trace zero matrices, $c$ is the canonical central element and $d= 1 \otimes \frac{d}{dt}$ is a degree derivation. We recall some facts about this affine Lie algebra from \cite{Kac}.

The generalized Cartan matrix associated with the affine Lie algebra $\mathfrak{g}$ is the $n \times n$ matrix $A = (a_{ij})_{0 \leq i,j \leq n-1}$ where $a_{ii} = 2, a_{ij} = -1$  for $ |i-j| = 1, a_{0,n-1} = a_{n-1,0} = -1,$ and $a_{ij} = 0$ otherwise. Let $\Pi = \{ \alpha_0, \alpha_1, \ldots, \alpha_{n-1}\}$ and $\Pi^{\vee} = \{h_0, h_1, \ldots, h_{n-1}\}$ denote the simple roots and simple coroots respectively. We recall that $\alpha_j(h_i) = a_{ij}$ and $\delta = \alpha_0 + \alpha_1 + \cdots +  \alpha_{n-1}$ is the null root. The Cartan subalgebra of $\mf{g}$ is $\mf{h} = $span$_{\mathbb{C}} \{h_1, h_2, \ldots, h_{n-1},d\}$. The fundamental weights $\{\Lambda_0 , \Lambda_1 , \ldots, \Lambda_{n-1}\}$ are defined by $\Lambda_{i}(h_{j}) = \delta_{ij}, \Lambda_{i}(d) = 0$ and the set $P = \mathbb{Z} \Lambda_{0} \oplus \mathbb{Z} \Lambda_{1} \oplus \cdots \oplus \mathbb{Z} \Lambda_{n-1} \oplus \mathbb{Z}\delta$ (resp.  $P^\vee = \mathbb{Z}h_{0} \oplus \mathbb{Z} h_{1} \oplus \cdots \oplus \mathbb{Z} h_{n-1} \oplus \mathbb{Z}d)$ is called the weight lattice (resp. coweight lattice).

The set  $P^+ =  \{ \Lambda \in P \mid \Lambda({h_i}) \geq 0, \forall  i \}$ is called the set of dominant integral weights.  For each $\Lambda \in P^+$, there is an irreducible highest weight $\mathfrak{g}$-module $V(\Lambda)$ with highest weight $\Lambda$.  For $\mu \in \mf{h}^*$, if  $V(\Lambda)_{\mu} = \{v \in V(\Lambda) \mid h(v) = \mu (h)v,    \forall h \in \mathfrak{h} \} \not = \{0\}$, then $\mu$ is called a weight of $V(\Lambda)$.    The multiplicity of $\mu$ in $V(\Lambda)$, denoted by $mult_\Lambda(\mu)$, is the dimension of $V(\Lambda)_\mu$.  If $\mu$ is a weight of $V(\Lambda)$ and $\mu + \delta$ is not a weight, then $\mu$ is called a maximal weight.   We denote the set of all maximal weights of $V(\Lambda)$ by max$(\Lambda)$. Hence, max$(\Lambda) \cap P^+$ is the set of all maximal dominant weights of $V(\Lambda)$.  

For $1 \leq \ell \leq \left \lfloor \frac{n}{2} \right \rfloor$, let $\gamma_\ell = \ell \alpha_0 + (\ell - 1)\alpha_{1} + (\ell - 2) \alpha_{2} + \cdots + \alpha_{\ell- 1} + \alpha_{n-\ell+1} + \cdots +  (\ell - 2)\alpha_{n-2} + (\ell - 1)\alpha_{n-1}$.  
It was shown in \cite{Tsu} that $ \max(2\Lambda_0) \cap P^+ = \left \{ 2\Lambda_0 - \gamma_\ell \mid 1 \leq \ell \leq \left \lfloor \frac{n}{2} \right \rfloor  \right \}$. Indeed,  $\left \{ k\Lambda_0 - \gamma_\ell \mid 1 \leq \ell \leq \left \lfloor \frac{n}{2} \right \rfloor  \right \} \subseteq \max(k\Lambda_0) \cap P^+$  for $k \geq 2$ (see \cite{JM}). The following Theorem was proved in \cite{JM}.  

\begin{theorem}\label{PathsTh} \cite{JM}  For $k \geq 2$, $1 \leq \ell \leq \left \lfloor \frac{n}{2} \right \rfloor$, we have $mult_{k\Lambda_0}(k\Lambda_0 - \gamma_\ell) = |\mathcal{T}_{\ell}^{k}|$.
\end{theorem}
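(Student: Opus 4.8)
The strategy is to convert the left-hand side into a count of crystal elements and then match those with the lattice-path data of Section \ref{paths}. Since $k\Lambda_0\in P^+$, the module $V(k\Lambda_0)$ has a crystal base $(L,B)$ with $B\cong B(k\Lambda_0)$, and Kashiwara's theory gives $mult_{k\Lambda_0}(\mu)=\#\{b\in B(k\Lambda_0):\wt(b)=\mu\}$ for every weight $\mu$. Writing $\mu_\ell=k\Lambda_0-\gamma_\ell$, it therefore suffices to exhibit a bijection between the weight subset $B(k\Lambda_0)_{\mu_\ell}$ and $\mc{T}_\ell^k$.

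To make $B(k\Lambda_0)$ combinatorially explicit for $\g=\widehat{sl}(n)=A_{n-1}^{(1)}$, I would use a path/tensor realization: either the Kyoto path model of Kang--Kashiwara--Misra--Miwa--Nakashima--Nakayashiki, realizing $B(k\Lambda_0)$ as eventually-ground-state sequences $(\dots\otimes p(2)\otimes p(1))$ of elements of a fixed level-$k$ perfect crystal $B^{(k)}$ (the $n$-tuples of nonnegative integers with sum $k$, equipped with its Kashiwara operators $\eit,\fit$ and energy function $H$), or, equivalently, the embedding $B(k\Lambda_0)\hookrightarrow B(\Lambda_0)^{\otimes k}$ together with a colored-Young-diagram (Young wall) description of $B(\Lambda_0)$. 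In either picture a crystal element of weight $\mu_\ell$ is determined by finitely many numerical data, because $\mu_\ell$ is a \emph{maximal} weight: maximality is equivalent to some coefficient of $\gamma_\ell$ being $0$, which forces the energy (equivalently the $\delta$-degree) of every path of weight $\mu_\ell$ to its extreme value, so only a bounded number of factors deviate from the ground state and the count is genuinely finite and combinatorial.

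The core of the argument is the translation of these configurations into admissible sequences of paths. Identify the $2\ell-1$ colors of $Y$ with the $2\ell-1$ affine Dynkin nodes in the support of $\gamma_\ell$ via $0\leftrightarrow$ node $0$, $j\leftrightarrow$ node $j$, $-j\leftrightarrow$ node $n-j$ for $1\le j\le\ell-1$ (this is where $\ell\le\lfloor n/2\rfloor$ is used: it is exactly what keeps the two ``arms'' $\alpha_1,\dots,\alpha_{\ell-1}$ and $\alpha_{n-\ell+1},\dots,\alpha_{n-1}$ of $\gamma_\ell$ disjoint). Given a crystal element of weight $\mu_\ell$, for each color $j$ and index $i$ record as $t_i^j$ the amount of $\alpha_{\mathrm{node}(j)}$ contributed between the $(i-1)$-st and $i$-th constituents of the configuration, with $t_0^j,t_1^j$ coming from the boundary constituents. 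Then $\sum_{i=0}^{k-1}t_i^j$ equals the coefficient $\ell-|j|$ of $\alpha_{\mathrm{node}(j)}$ in $\gamma_\ell$, which is precisely the number of $j$-colored boxes of $Y$, so the $t_i^j$ assemble into $k-1$ nested lattice paths $p_1\le\cdots\le p_{k-1}$ on $Y$. One then checks that the tensor-product conditions $\vep_i\le\vp_i$ become the monotonicities $t_i^j\le t_{i-1}^j$ and $t_i^j\le t_i^{j-1}$ ($j>0$), $t_i^j\le t_i^{j+1}$ ($j<0$); that the level-$k$ (perfect-crystal coordinate sum, resp.\ tensor-length-$k$) constraint becomes $t_i^j\le \ell-|j|-t_1^j-\sum_{a=1}^{i-1}t_a^j$ together with the requirement that $\lambda=(t_0^0,\dots,t_{k-1}^0)$ have at most $k$ parts; and that membership in the ground-state connected component becomes exactly the condition that $p_1$ not cross the diagonal $y=x-\ell$. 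These are precisely the clauses of Definition \ref{pathsdef}, so the assignment lands in $\mc{T}_\ell^k$; reversing it, much as $\sigma$ inverts $\tau$ in the proof of Theorem \ref{tau} by rebuilding the constituents from the $t_i^j$, shows it is a bijection, which proves the theorem.

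The main obstacle is the dictionary in the previous paragraph: one must fix once and for all the precise level-$k$ perfect crystal (or the Young wall description of $B(\Lambda_0)$), its energy function, and its ground-state path, and then verify line by line that the crystal-theoretic constraints coincide \emph{exactly} with the inequalities of Definition \ref{pathsdef} --- nothing extra, nothing missing. Two points I expect to be delicate: the bookkeeping of the energy function, which is what turns ``this sequence really is a path in the ground-state component'' into the single diagonal condition on $p_1$ (and which also pins down the $\delta$-degree once maximality is invoked); and keeping the two arms of $\gamma_\ell$ synchronized, where one genuinely uses $\ell\le\lfloor n/2\rfloor$ together with the reflection symmetry of $Y$ about its main diagonal.
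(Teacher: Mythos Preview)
This theorem is not proved in the present paper: it is quoted from \cite{JM} with the sentence ``The following Theorem was proved in \cite{JM},'' and is then used as a black box to deduce Corollary~\ref{mult_formula} and the avoiding-permutations corollary. So there is no proof here to compare your attempt against.

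Your outline is the right architecture --- the argument in \cite{JM} does pass through a crystal/path realization of $B(k\Lambda_0)$ and a combinatorial translation into the lattice-path data --- but what you have written is a strategy, not a proof. You name the gap yourself in your last paragraph: the entire substance of the theorem is the ``dictionary'' verifying that the crystal constraints on a weight-$\mu_\ell$ element coincide, clause for clause, with the inequalities of Definition~\ref{pathsdef}, and you have not carried this out. The sentences ``the tensor-product conditions $\vep_i\le\vp_i$ become the monotonicities $t_i^j\le t_{i-1}^j$ \ldots'' and ``membership in the ground-state connected component becomes exactly the condition that $p_1$ not cross the diagonal $y=x-\ell$'' are assertions, not arguments; each one requires fixing a specific model (which perfect crystal, which ground state, which energy normalization), writing down a general weight-$\mu_\ell$ element explicitly, and computing. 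Until that is done you have a correct plan but no proof.
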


Hence  the following result  is a consequence of Theorem \ref{squaring} and Theorem \ref{PathsTh}.  

\begin{corollary} \label{mult_formula}
For $k \geq 2$, $1 \leq \ell \leq \left \lfloor \frac{n}{2} \right \rfloor$, we have $mult_{k\Lambda_0}(k\Lambda_0 - \gamma_\ell) = \ds \sum_{\lambda \vdash \ell, h(\lambda) \leq k} (f^\lambda)^2$.
   
\end{corollary}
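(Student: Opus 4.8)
The plan is simply to concatenate the two results that immediately precede this statement. By Theorem~\ref{PathsTh} (proved in \cite{JM}), the weight multiplicity $mult_{k\Lambda_0}(k\Lambda_0-\gamma_\ell)$ equals $|\mc{T}_\ell^k|$, the number of admissible sequences of $k-1$ lattice paths on the colored $\ell\times\ell$ square $Y$. By Theorem~\ref{squaring}, this number is $\ds\sum_{\lambda\vdash\ell,\,h(\lambda)\leq k}(f^\lambda)^2$. Equating the two expressions yields the corollary, so the proof amounts to a single substitution; there is no genuine obstacle to overcome, since all of the work has already been done in Section~\ref{paths} and in \cite{JM}.

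The one point deserving a moment's attention is the compatibility of hypotheses. Theorem~\ref{squaring} is valid for every $k\geq 2$ and $\ell\geq 1$, whereas Theorem~\ref{PathsTh} additionally requires $1\leq\ell\leq\lfloor n/2\rfloor$, which is exactly the range in which $\gamma_\ell$, and hence the dominant maximal weight $k\Lambda_0-\gamma_\ell$, is defined. Since the range asserted in the corollary is precisely this intersection, no case is lost and nothing further is needed.

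For a reader who prefers a self-contained picture, one can trace the composite bijection underlying the two cited theorems: the combinatorial model of \cite{JM} matches a basis of $V(k\Lambda_0)_{k\Lambda_0-\gamma_\ell}$ with $\mc{T}_\ell^k$; the pairing in the proof of Theorem~\ref{squaring} splits an admissible sequence of type $\lambda$ into an ordered pair of self-conjugate sequences of type $\lambda$; and the map $\tau$ of Theorem~\ref{tau} identifies self-conjugate sequences of type $\lambda$ with standard Young tableaux of shape $\lambda$. Summing $(f^\lambda)^2$ over all $\lambda\vdash\ell$ with $h(\lambda)\leq k$ along this chain recovers the stated formula, and I would close by remarking that, via (\cite{Sta}, Corollary 7.23.12), this quantity also equals the number of $(k+1)k\cdots21$-avoiding permutations of $\{1,\dots,\ell\}$, establishing Conjecture~4.13 of \cite{JM} as announced in the introduction.
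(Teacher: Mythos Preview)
Your proof is correct and follows exactly the paper's approach: the corollary is stated there as an immediate consequence of Theorem~\ref{squaring} and Theorem~\ref{PathsTh}, with no additional argument given. Your remarks on hypothesis compatibility and the underlying chain of bijections are accurate elaborations, but the substance of the proof is the same one-line substitution the paper uses.
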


We denote a permutation of $\{1,2,\ldots, \ell\}$ by a sequence $w=w_1w_2\ldots w_\ell$ if $ 1 \mapsto w_1, 2 \mapsto w_2, \ldots , \ell \mapsto w_\ell$.  If a decreasing subsequence of $w$ has $m$ terms, we say it is a decreasing subsequence of length $m$.  A $(k+1)k\ldots21$-avoiding permutation is a permutation $w$ with no decreasing subsequence of length $k+1$.  For example, $w=26873415$ has longest decreasing subsequence of length 4 (namely, 8731 and 8741) and so $w$ is a 54321-avoiding permutation.  We have the following result as a consequence of (\cite{Sta}, Corollary 7.23.12)  and Theorem \ref{PathsTh} which proves Conjecture $4.13$ in \cite{JM}.

\begin{corollary} 
For $k \geq 2$, $1 \leq \ell \leq \left \lfloor \frac{n}{2} \right \rfloor$, we have $mult_{k\Lambda_0}(k\Lambda_0 - \gamma_\ell) =$  the number of $(k+1)k\cdots21$-avoiding permutations of $\ell$.   
\end{corollary}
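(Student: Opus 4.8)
The plan is to chain together the results already in place. By Theorem~\ref{PathsTh} we have $mult_{k\Lambda_0}(k\Lambda_0 - \gamma_\ell) = |\mc{T}_\ell^k|$, and by Theorem~\ref{squaring} we have $|\mc{T}_\ell^k| = \sum_{\lambda \vdash \ell,\, h(\lambda) \leq k} (f^\lambda)^2$. So the only thing left is to identify this last sum with the number of $(k+1)k\cdots21$-avoiding permutations of $\{1,2,\ldots,\ell\}$, which is exactly (\cite{Sta}, Corollary 7.23.12); thus the corollary will follow by substitution.

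For completeness I would recall \emph{why} that sum counts the avoiding permutations, via RSK. The Robinson--Schensted correspondence gives a bijection between permutations $w$ of $\{1,\ldots,\ell\}$ and ordered pairs $(P,Q)$ of standard Young tableaux of a common shape $\lambda \vdash \ell$, so the number of permutations whose RSK shape is $\lambda$ equals $(f^\lambda)^2$. By Schensted's theorem (in the ``decreasing'' form, i.e.\ Greene's theorem applied to a single chain), the length of the longest decreasing subsequence of $w$ equals the number of rows of $\lambda$, which is $h(\lambda)$. Hence $w$ has no decreasing subsequence of length $k+1$ precisely when its RSK shape satisfies $h(\lambda) \leq k$. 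Summing $(f^\lambda)^2$ over all such $\lambda$ gives that the number of $(k+1)k\cdots21$-avoiding permutations of $\{1,\ldots,\ell\}$ is $\sum_{\lambda \vdash \ell,\, h(\lambda) \leq k} (f^\lambda)^2$, matching the expression produced by Theorems~\ref{PathsTh} and~\ref{squaring}.

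I do not expect a genuine obstacle here, since the statement is a bookkeeping consequence of facts already available. The one point that warrants care is the direction of the RSK statistic: one must use that it is the \emph{decreasing} subsequence length (equivalently the length of the first column of the shape) that is bounded, so that the pattern-avoidance hypothesis $(k+1)k\cdots21$ corresponds to the height bound $h(\lambda)\le k$ rather than to a bound on $\lambda_1$. Once that correspondence is stated correctly, the three equalities compose and the proof is complete.
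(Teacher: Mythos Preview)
Your proposal is correct and follows exactly the route the paper takes: combine Theorem~\ref{PathsTh} and Theorem~\ref{squaring} (equivalently, Corollary~\ref{mult_formula}) with (\cite{Sta}, Corollary~7.23.12). Your added explanation via RSK and Schensted's theorem simply unpacks the cited result and is accurate, including the care you take that the longest \emph{decreasing} subsequence length corresponds to $h(\lambda)$.
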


\begin{example}
Consider maximal dominant weight $4\Lambda_0 - \gamma_5$ of $V(4\Lambda_0)$.  By Corollary \ref{mult_formula}, the multiplicity of $4\Lambda_0 - \gamma_5$ is equal to 
$\ds \sum_{\lambda \vdash 5, h(\lambda) \leq 4} (f^\lambda)^2$.  The partitions of 5 with height less than or equal to 4 are (5), (4,1), (3,2), (3,1,1),(2,2,1), and (2,1,1,1).  In Table \ref{mult_ex}, we calculate $f^\lambda$ for each of these partitions using the hook length formula.  Thus, $mult_{4\Lambda_0}(4\Lambda_0 - \gamma_5) = 1^2 + 4^2 + 5^2 + 6^2 + 5^2 + 4^2 = 119$.  

\begin{table}[h]
\caption{ Values of $f^\lambda$ for $\lambda \vdash 5$, $ht(\lambda) \leq 4$}
\label{mult_ex}
\centering
\begin{tabular}{|c|c|} \hline
$\lambda$ & $f^\lambda$ \\ \hline
& \\
{\scriptsize \tableau{ & & & & } }  & $\ds \frac{5!}{5 \cdot 4 \cdot 3 \cdot 2 \cdot 1} = 1$ \\ 
& \\ \hline & \\
{\scriptsize \tableau{ & & & \\ \\ } }  & $\ds \frac{5!}{5 \cdot 3 \cdot 2 \cdot 1 \cdot 1} = 4$ \\
& \\ \hline & \\
{\scriptsize \tableau{ & & \\ &  } }  & $\ds \frac{5!}{4 \cdot 3 \cdot 1 \cdot 2 \cdot 1} = 5$ \\
& \\ \hline & \\
{\scriptsize \tableau{ & & \\  \\ \\  } }  & $\ds \frac{5!}{5 \cdot 2 \cdot 1 \cdot 2 \cdot 1} = 6$ \\
& \\ \hline & \\
{\scriptsize \tableau{ &  \\ & \\  \\ } }  & $\ds \frac{5!}{4 \cdot 2 \cdot 3 \cdot 1 \cdot 1} = 5$ \\
& \\ \hline & \\
{\scriptsize \tableau{ &  \\  \\  \\  \\ } }  & $\ds \frac{5!}{5 \cdot 1 \cdot 3 \cdot 2 \cdot 1} = 4$ \\
& \\ \hline
\end{tabular}
\end{table}
\end{example}

\newpage


\end{document}